\DeclareMathOperator*{\argmax}{\arg\,\max}
\begin{document}

\newtheorem{hypotheses}{Hypotheses}
\newtheorem{problem}{Problem}
\newtheorem{example}{Example}
\newtheorem{definition}{Definition}
\newtheorem{assumption}{Assumption}
\newtheorem{theorem}{Theorem}
\newtheorem{lemma}{Lemma}
\newtheorem{corollary}{Corollary}[theorem]
\newtheorem{proposition}{Proposition}
\newtheorem*{remark}{Remark}
\newtheorem{conjecture}{Conjecture}
\numberwithin{assumption}{section}

\newcommand{\edit}[1]{\textcolor{blue}{#1}}
\newcommand{\ignore}[1]{}
\newcommand{\diff}{\mathop{}\!\mathrm{d}}

\algnewcommand{\Inputs}[1]{%
  \State \textbf{Inputs:}
  \Statex \hspace*{\algorithmicindent}\parbox[t]{.8\linewidth}{\raggedright #1}
}
\algnewcommand{\Initialize}[1]{%
  \State \textbf{Initialize:}
  \Statex \hspace*{\algorithmicindent}\parbox[t]{.8\linewidth}{\raggedright #1}
}

\title{\LARGE \bf
Resilience to Non-Compliance in Coupled Cooperating Systems \vspace{-1.5ex}
}

\author{Brooks A. Butler and Philip E. Par\'{e}$^*$
\thanks{*Brooks A. Butler is with the Department of Electrical Engineering and Computer Science at the University of California, Irvine and~Philip. E. Par\'e is with the Elmore Family School of Electrical and Computer Engineering at Purdue University. Emails: bbutler2@uci.edu and philpare@purdue.edu.
This work was funded by Purdue’s Elmore Center for Uncrewed Aircraft Systems and the National Science Foundation,
grant NSF-ECCS \#2238388.}
}

\maketitle

\begin{abstract}
This letter explores the implementation of a safe control law for systems of dynamically coupled cooperating agents. Under a CBF-based collaborative safety framework, we examine how the maximum safety capability for a given agent, which is computed using a collaborative safety condition,
influences safety requests made to neighbors. We provide conditions under which neighbors may be resilient to non-compliance of neighbors to safety requests, and compute an upper bound for the total amount of non-compliance an agent is resilient to, given its 1-hop neighborhood state and knowledge of the network dynamics. We then illustrate our results via simulation on a networked susceptible-infected-susceptible (SIS) epidemic model.
\end{abstract}

\vspace{-2ex}

\section{Introduction}
Ensuring safety for complex networked systems is a challenging problem across many applications. Some examples of critical networked system applications include smart grid management \cite{tuballa2016review}, uncrewed aerial drone swarms \cite{tahir2019swarms, chung2018survey}, autonomous vehicle networks \cite{plathottam2018next}, multi-agent robot systems \cite{wang2017safety}, and the mitigation of epidemic spreading processes~\cite{pare2020modeling}. While much of the foundational work on safety for dynamical systems via set invariance was performed by Nagumo during the 1940s \cite{blanchini1999set}, the study of safety-critical control has seen a significant resurgence in recent years, due largely to the introduction and refinement of \textit{control barrier functions} (CBFs)~\cite{ames2016control}.
Of particular interest for this work are scenarios where agents in networked systems may be treated as independent entities with individual safety requirements. This dynamically coupled environment motivates the need for collaborative frameworks that can facilitate cooperation between neighbors and enable the collective safety of all agents \cite{lewis2013cooperative,li2017cooperative,wang2017cooperative,yu2017distributed}.

Resilience for networked systems is crucial to achieving reliable performance \cite{ramachandran2021resilient,dobson2019self,filippini2014modeling,reed2009methodology}. In this work, we define resilience similarly to \cite{ramachandran2021resilient}, which is the ability of the network to operate reliably even after experiencing some failures. 
Note that resilience is different from the property of robustness, which is the ability of systems to reject disturbances such as noise.
In our previous work, we developed a CBF-based safety-filter control law that defines a collaborative safety framework for cooperating coupled agents \cite{butler2024collaborativesafety} and applied this framework to a formation control problem with obstacle avoidance \cite{butler2024collabformation}. In this work, we examine the case where, under the previously developed collaborative framework, agents may be unable to fulfill their commitments to maintain their neighbors' safety requirements. Thus, we investigate how resilient each agent is to this possible neighbor failure, which is a first step towards identifying factors that could lead to cascading failures across the network \cite{valdez2020cascading}.
In other words, we investigate the following question: \textit{How resilient is each agent to non-compliant neighbors?}

Thus, in this letter, we make the following contributions:
\begin{itemize}
    \item Under the formulation of our collaborative safety framework \cite{butler2024collaborativesafety}, we show how the computation of the maximum safety capability for a given agent influences its safety requests made to neighbors.
    \item We define a metric for non-compliance and provide an upper bound of the total amount of non-compliance an agent is resilient to given its 1-hop neighborhood state and knowledge of networked dynamics.
\end{itemize}
We introduce notation, networked model definitions, and safety definitions in Section~\ref{sec:preliminaries} and then provide the necessary background for discussing safety in networked systems in the context of our collaborative safety framework from \cite{butler2024collaborativesafety} in Section~\ref{sec:safe_in_net_sys}. We then explore properties of resilience to non-compliance in the context of our collaborative safety framework in Section~\ref{sec:resiliance} and illustrate our results via simulation on a networked susceptible-infected-susceptible (SIS) epidemic model in Section~\ref{sec:simulations}.

\vspace{-1ex}

\section{Preliminaries} \label{sec:preliminaries}
In this section, we define the notation used in this paper, introduce preliminaries for networked dynamic systems, and discuss safety definitions for networked systems.
\subsection{Notation}
\noindent 
Let $\text{Int} \mathcal{C}$, $\partial \mathcal{C}$, $|\mathcal{C}|$ denote the interior, boundary, and cardinality of the set $\mathcal{C}$, respectively. $\mathbb{R}$ and $\mathbb{N}$ are the set of real numbers and positive integers, respectively. Let $D^r$ denote the set of functions $r$-times continuously differentiable in all arguments, and $\mathcal{K}$ the set of class-$\mathcal{K}$ functions. We define $[n] \subset \mathbb{N}$ to be a set of indices $\{1, 2, \dots, n\}$.
We define the Lie derivative of the function $h:\mathbb{R}^N \rightarrow \mathbb{R}$ with respect to the vector field generated by $f:\mathbb{R}^N \rightarrow \mathbb{R}^N$ as
\begin{equation}
    \mathcal{L}_f h(x) = \frac{\partial h(x)}{\partial x} f(x).
\end{equation}
We define higher-order Lie derivatives with respect to the same vector field $f$ with a recursive formula \cite{robenack2008computation}, where $k>1$, as
\begin{equation}
    \mathcal{L}^k_f h(x) = \frac{\partial \mathcal{L}^{k-1}_f h(x)}{\partial x} f(x).
\end{equation}

\subsection{Networked Dynamic System Model} \label{sec:sub:networked_model}
We define a networked system using a graph $\mathcal{G} = (\mathcal{V}, \mathcal{E})$, where $\mathcal{V}$ is the set of $n = \vert \mathcal{V} \vert$ nodes, $ \mathcal{E} \subseteq \mathcal{V}\times \mathcal{V} $ is the set of edges. Let $\mathcal{N}_i^+$ be the set of all neighbors with an incoming connection to node~$i$, where 
\begin{equation}
    \mathcal{N}_i^+ = \{j \in [n]\setminus \{i\}: (i,j) \in \mathcal{E} \}.
\end{equation}
Similarly, all nodes with an outgoing connection from $i$ to $j$ are given by
\begin{equation}
    \mathcal{N}_i^- = \{j \in [n]\setminus \{i\}: (j,i) \in \mathcal{E} \},
\end{equation}
with the complete set of neighboring nodes given by
\begin{equation}
    \mathcal{N}_i = \mathcal{N}_i^+ \cup \mathcal{N}_i^-.
\end{equation}

\noindent Further, we define the state vector for each node as $x_i \in \mathbb{R}^{N_i}$, with $N = \sum_{i \in [n]} N_i$ being the state dimension of the entire system, $N_i^+ = \sum_{j \in \mathcal{N}_i^+} N_j$ the combined dimension of incoming neighbor states, and $x_{\mathcal{N}_i^+} \in \mathbb{R}^{N_i^+}$ denoting the combined state vector of all incoming neighbors.  Then, for each node~$i \in [n]$, we describe its state dynamics, which are potentially nonlinear, time-invariant, and control-affine, as
\begin{equation} \label{eq:sys_dyn_net}
    \dot{x}_i = f_i(x_i, x_{\mathcal{N}_i^+}) + g_i(x_i) u_i,
\end{equation}
where $f_i:\mathbb{R}^{N_i + N_i^+} \rightarrow \mathbb{R}^{N_i}$ and $g_i:  \mathbb{R}^{N_i} \rightarrow \mathbb{R}^{N_i} \times \mathbb{R}^{M_i}$ locally Lipschitz for all $i \in [n]$, and $u_i \in \mathcal{U}_i \subset \mathbb{R}^{M_i}$. 
For notational compactness, given a node~$i \in [n]$, we collect the 1-hop neighborhood state as $\mathbf{x}_i = (x_i, x_{\mathcal{N}_i^+})$ and the 2-hop neighborhood state as $\mathbf{x}_i^+ = (x_i, x_{\mathcal{N}_i^+}, x_{\mathcal{N}_j^+}~\forall j \in \mathcal{N}_i^+)$.

\subsection{Safety Definitions} \label{sec:sub:safety_def}
We define a node-level safety constraint for node~$i \in [n]$ with the set
\vspace{-1.5ex}
\begin{equation}\label{eq:safe_set_i_fullstate}
    \begin{aligned}
        \mathcal{C}_i &= \left\{x_i \in \mathbb{R}^{N_i} : h_i(x_i) \geq 0 \right\},
    \end{aligned}
\end{equation}
where $h_i \in D^r, r\geq 1$ and $h_i:\mathbb{R}^{N_i} \rightarrow \mathbb{R}$ is a function whose zero-super-level set defines the region which node $i\in [n]$ considers to be safe (i.e., if $h_i(x_i)<0$, then node~$i$ is no longer safe). 


\section{Safety in Networked Dynamic Systems} \label{sec:safe_in_net_sys}
 


One challenge that comes from the definition of node-level safety constraints, as defined in Section~\ref{sec:sub:safety_def}, which depend only on the state of the given node, is that when evaluating the derivative of $h_i(x_i)$ we must incorporate the network influence of its neighbors $j \in \mathcal{N}_i^+$, where we define the derivative of $h_i$ as
\begin{equation*}
    \dot{h}_i(\mathbf{x}_i, u_i) = \mathcal{L}_{f_i} h_i(\mathbf{x}_i) + \mathcal{L}_{g_i} h_i(x_i) u_i. 
\end{equation*}
\noindent
Notice that in this case, we can define the node-level constraint function as the mapping $h_i:\mathbb{R}^{N_i} \rightarrow \mathbb{R}$; however, the dynamics of node~$i$, whose dynamics are a function of all neighboring nodes in $\mathcal{N}_i^+$, require that $\dot{h}_i:\mathbb{R}^{N_i+N_i^+} \rightarrow \mathbb{R}$. 


Using principles from \textit{high-order barrier functions} \cite{xiao2021high},
we can analyze the effect of the 1-hop neighborhood dynamics on the safety of node~$i$ by computing the second-order derivative of $h_i$, 
which can be expressed as

\small
\begin{equation} \label{eq:sec_der_h_i_control_aff}
    \begin{aligned}
        \ddot{h}_i(\mathbf{x}_i^+,u_i, u_{\mathcal{N}_i^+}, \dot{u}_i) &= \sum_{j \in \mathcal{N}_i^+} \big[ \mathcal{L}_{f_j} \mathcal{L}_{f_i}  h_i(\mathbf{x}_i^+) +  \mathcal{L}_{g_j} \mathcal{L}_{f_i}  h_i(\mathbf{x}_i) u_j \big] \\
         &
         + \mathcal{L}^2_{f_i}  h_i(\mathbf{x}_i) +  u_i^\top \mathcal{L}^2_{g_i} h_i(x_i) u_i + \mathcal{L}_{g_i}h_i(x_i)\dot{u}_i \\ 
         &
         + \big(\mathcal{L}_{f_i} \mathcal{L}_{g_i}h_i(\mathbf{x}_i)^\top +  \mathcal{L}_{g_i} \mathcal{L}_{f_i}h_i(\mathbf{x}_i)\big)u_i.
    \end{aligned}
\end{equation}
\normalsize

\noindent
Notice that the dynamics of each neighbor $j \in \mathcal{N}_i^+$ 
and $\dot{u}_i$ appear in the second-order differential expression of $h_i$. To assist in our analysis of the high-order dynamics of $h_i$, we make the following assumption.

\begin{assumption} \label{assume:u_dot_func_u}
    For a given node~$i\in [n]$, let $\dot{u}_i := d(u_i)$, where $d(u_i): \mathbb{R}^{M_i} \rightarrow \mathbb{R}^{M_i}$ is locally Lipschitz.
\end{assumption}

While obtaining a closed-form solution for $\dot{u}_i$ may be challenging in some applications, in practice, $d(u_i)$ may be approximated using discrete-time methods. 
Given this structure in the 1-hop neighborhood controlled dynamics, we construct a
series of \textit{high-order control barrier functions}
under Assumption~\ref{assume:u_dot_func_u}, as
\begin{equation} \label{eq:MO_funcs}
    \begin{aligned}
        \psi_i^0(x_i) &:= h_i(x_i) \\
        \psi_i^1(\mathbf{x}_i,u_i) &:= \dot{\psi}_i^0(\mathbf{x}_i,u_i) + \eta_i(\psi_i^0(x_i)) \\
        \psi_i^2(\mathbf{x}_i^+,u_i,u_{\mathcal{N}_i^+}) &:= \dot{\psi}_i^1(\mathbf{x}_i^+,u_i,u_{\mathcal{N}_i^+}) + \kappa_i(\psi_i^1(\mathbf{x}_i, u_i)),
    \end{aligned}
\end{equation}
where $\eta_i,\kappa_i$ are class-$\mathcal{K}$ functions. 
We can also express $\psi_i^2(\mathbf{x}_i^+,u_i,u_{\mathcal{N}_i^+})$ in terms of $h_i$ as
\begin{equation} \label{eq:psi2_terms_h}
    \begin{aligned}
        \psi_i^2(\mathbf{x}_i^+,u_i,u_{\mathcal{N}_i^+}) &= \ddot{h}_i(\mathbf{x}_i^+,u_i,u_{\mathcal{N}_i^+}) + \dot{\eta}_i(h_i(x_i),\mathbf{x}_i,u_i) \\
        & \quad + \kappa_i\big(\dot{h}_i(\mathbf{x}_i,u_i) + \eta_i(h_i(x_i)) \big).
    \end{aligned}
\end{equation}
Further, we may rewrite \eqref{eq:psi2_terms_h} by collecting the terms independent of $u_j$ as
\begin{equation} \label{eq:psi2_grouped_terms}
     \psi_i^2(\mathbf{x}_i^+,u_i,u_{\mathcal{N}_i^+}) = \sum_{j \in \mathcal{N}_i^+} a_{ij}(\mathbf{x}_i) u_j  +  c_i(\mathbf{x}_i^+,u_i),
\end{equation}
where
\begin{equation} \label{eq:a_ij}
    a_{ij}(\mathbf{x}_i) = \mathcal{L}_{g_j} \mathcal{L}_{f_i}  h_i(\mathbf{x}_i)
\end{equation}
and
\footnotesize
\begin{equation} \label{eq:capability_node_i}
    \begin{aligned}
        c_i(\mathbf{x}_i^+,u_i) &= \sum_{j \in \mathcal{N}_i^+} \mathcal{L}_{f_j} \mathcal{L}_{f_i}  h_i(\mathbf{x}_i^+) + \mathcal{L}^2_{f_i} h_i(\mathbf{x}_i) + u_i^\top \mathcal{L}^2_{g_i} h_i(x_i) u_i \\
        & 
        + \big(\mathcal{L}_{f_i} \mathcal{L}_{g_i}h_i(\mathbf{x}_i)^\top +  \mathcal{L}_{g_i} \mathcal{L}_{f_i}h_i(\mathbf{x}_i)\big)u_i + \mathcal{L}_{g_i}h_i(x_i)d(u_i) \\ 
        & 
        + \dot{\eta}_i\big(h_i(x_i),\mathbf{x}_i,u_i \big) + \kappa_i\big(\dot{h}_i(\mathbf{x}_i,u_i) + \eta_i(h_i(x_i)) \big).
    \end{aligned}
\end{equation}
\normalsize

\noindent
In this form, we may consider $a_{ij}(\mathbf{x}_i) \in \mathbb{R}^{M_j}$ to be the effect that node $j \in \mathcal{N}_i^+$ has on the safety of node~$i \in [n]$ via its own control inputs $u_j$, and $c_i(\mathbf{x}_i^+,u_i)$ is the effect that node~$i \in [n]$ has on its own safety combined with the uncontrolled system dynamics.
The functions in \eqref{eq:MO_funcs} in turn define the constraint sets
\begin{equation} \label{eq:MO_sets}
    \begin{aligned}
        \mathcal{C}_i^1 &= \{\mathbf{x}_i \in  \mathbb{R}^{N_i+N_i^+}: \psi_i^0(x_i) \geq 0 \} \\
        \mathcal{C}_i^2 &= \{\mathbf{x}_i \in  \mathbb{R}^{N_i+N_i^+}: \exists u_i \in \mathcal{U}_i \text{ s.t. } \psi_i^1(\mathbf{x}_i, u_i) \geq 0 \}.
    \end{aligned}
\end{equation}

We define a \textit{collaborative control barrier function} for node~$i$ that takes into account the control actions of its incoming neighbors $j \in \mathcal{N}_i^+$.

\begin{definition}
    Let $\mathcal{C}_i^1$ and $\mathcal{C}_i^2$ be defined by \eqref{eq:MO_funcs} and \eqref{eq:MO_sets}, under Assumption~\ref{assume:u_dot_func_u}. We have that $h_i$ is a \textbf{collaborative control barrier function} (CCBF) for node~$i \in [n]$ if $h_i \in C^2$ and $\forall \mathbf{x}_i \in \mathcal{C}_i^1 \cap \mathcal{C}_i^2$ there exists $(u_i, u_{\mathcal{N}_i^+}) \in \mathcal{U}_i \times \mathcal{U}_{\mathcal{N}_i^+}$ such that 
    \begin{equation} \label{eq:CCBF_cond}
        \psi_i^2(\mathbf{x}_i^+, u_i, u_{\mathcal{N}_i^+}) \geq 0,
    \end{equation}
    where $\eta_i, \kappa_i$ are class-$\mathcal{K}$ functions and $\eta_i \in D^r$, with $r \geq 1$.
\end{definition}

In \cite{butler2024collaborativesafety}, we prove the forward invariance of  $\mathcal{C}_i^1 \cap \mathcal{C}_i^2$ given the existence of a CCBF and use this property to construct a decentralized algorithm for \textit{collaborative safety} among cooperating network agents. A high-level pseudo-code description of this procedure is given in Algorithm~\ref{alg:colab_safety}, where, for a given agent $i \in [n]$, safety commitments $\bar{c}_{ij}$ and $\bar{c}_{ki}$ are initialized for incoming neighbors $j \in \mathcal{N}_i^+$ and outgoing neighbors $k \in \mathcal{N}_i^-$, respectively, as well as the set of allowable actions for agent $i$, $\overline{\mathcal{U}}_i$. Then, for each round of collaboration $\tau_i$, agent $i$ computes its maximum safety capability as
\begin{equation}
\label{eq:max_cap}
    \bar{c}_i = \max_{u_i \in \mathcal{U}_i} c_i(\mathbf{x}_i^+, u_i),
\end{equation}
\noindent
which is used as an input for the Collaborate function on Line 5, defined in \cite{butler2024collaborativesafety}, to negotiate safety requests between neighbors $\mathcal{N}_i$ using the collaborative safety condition as defined in \eqref{eq:CCBF_cond}. After each round of collaboration, a safety deficit/surplus $\delta_i$ is returned along with the updated set of allowed actions for agent~$i$ that can satisfy the safety requirements of both agent~$i$ and its neighbors, as well as the current safety commitments for all neighbors. If $\delta_i > 0$, indicating a safety surplus, then Algorithm~\ref{alg:colab_safety} returns the set of feasibly safe actions $\overline{\mathcal{U}}_i$ for agent $i$ and its neighbors $\mathcal{N}_i^-$. In \cite{butler2024collaborativesafety}, we provide conditions under which Algorithm~\ref{alg:colab_safety} will converge asymptotically to at least one safe action for all agents, if it exists. In this paper, we explore the scenario where there is potential error, or non-compliance, in the safety commitments $\bar{c}_{ij}$ given by each neighbor $j \in \mathcal{N}_i^+$ and how resilient each agent is to the collective non-compliance of all of its neighbors. 

\begin{algorithm}
\caption{Collaborative Safety}\label{alg:colab_safety}
    \begin{algorithmic}[1]
        \Initialize{
            $i \gets i_0; \overline{\mathcal{U}}_i \gets \mathcal{U}_i; \tau_i \gets 0$ \\
            $\bar{c}_{ij} \gets 0, \forall j \in \mathcal{N}_i^+; \bar{c}_{ki} \gets 0, \forall k \in \mathcal{N}_i^-$
        }
        \Repeat
            \State $\tau_i \gets \tau_i + 1$
            \State $\bar{c}_i \gets \max_{u_i \in \overline{\mathcal{U}}_i} c_i(\mathbf{x}_i^+, u_i)$
            \State $\delta_i,\, \overline{\mathcal{U}}_i, \{\bar{c}_{ij}\}_{j \in \mathcal{N}_i^+}, \{\bar{c}_{ki}\}_{k \in \mathcal{N}_i^-}$ 
            \Statex \hspace{6ex} \small{$\gets$ Collaborate$\left(\bar{c}_i, \overline{\mathcal{U}}_i, \{\bar{c}_{ij}\}_{j \in \mathcal{N}_i^+}, \{\bar{c}_{ki}\}_{k \in \mathcal{N}_i^-}\right)$ \cite{butler2024collaborativesafety}}
            \normalsize
            \Until{$\delta_i \geq 0$}
        \State\Return $\overline{\mathcal{U}}_i$
    \end{algorithmic}
\end{algorithm}

\vspace{-3ex}

\section{Resilience to Non-Compliance} \label{sec:resiliance}
In this section, we explore properties of resilience in the context of collaborative safety via Algorithm~\ref{alg:colab_safety}. For our discussion, we consider the choice of class-$\mathcal{K}$ functions $\eta_i(\cdot)$ and $\kappa_i(\cdot)$ as a part of $\psi_i^1(\mathbf{x}_i, u_i)$ and $\psi_i^2(\mathbf{x}_i^+, u_i, u_{\mathcal{N}_i^+})$ and their effect on the behavior of neighbors with respect to communicating safety needs to neighbors and choosing actions that guarantee individual node safety. To simplify our discussion, we consider the set of linear class-$\mathcal{K}$ functions.
\begin{assumption}\label{assumne:linear_class_K}
    Let $\eta_i(z) := \eta_i z$ and $\kappa_i(z) := \kappa_i z$, where $z \in \mathbb{R}^{N_i}$ and $\eta_i, \kappa_i \in \mathbb{R}_{\geq 0}$.
\end{assumption}
The selection of $\eta_i$ and $\kappa_i$ may be used to determine the qualitative behavior of a given node, where large $\eta_i$ $\left(\kappa_i\right)$ will prevent the first (second) derivative safety condition in \eqref{eq:MO_funcs} from activating until $x_i$ approaches much closer to the boundary of $\mathcal{C}_i^1$ ($\mathcal{C}_i^2$). In other words, we can use $\eta_i$ and $\kappa_i$ to tune how conservative node~$i$ is in avoiding the boundary of its safety constraints.  
Additionally, we make the following assumption on the control constraints for node $i \in [n]$.
\begin{assumption} \label{assume:U_i_closed}
     Let $\mathcal{U}_i \subset \mathbb{R}^{M_i}$ be a closed set.
\end{assumption}
Note that a key step in collaborative safety via Algorithm~\ref{alg:colab_safety} is the evaluation of the maximum capability of node $i \in [n]$ through \eqref{eq:capability_node_i} and \eqref{eq:max_cap}. Under Assumption~\ref{assumne:linear_class_K} we may express \eqref{eq:capability_node_i} as

\vspace{-2ex}

\footnotesize
\begin{equation}
    \begin{aligned}
        c_i(\mathbf{x}_i^+,u_i,\nu_i) &= \sum_{j \in \mathcal{N}_i^+} \mathcal{L}_{f_j} \mathcal{L}_{f_i} h_i(\mathbf{x}_i^+) +\mathcal{L}^2_{f_i} h_i(\mathbf{x}_i) + u_i^\top \mathcal{L}^2_{g_i} h_i(x_i) u_i\\ 
        & + \kappa_i \eta_i h_i(x_i) + \nu_i \mathcal{L}_{f_i}h_i(\mathbf{x}_i) 
        + \mathcal{L}_{g_i}h_i(x_i)d(u_i) \\ 
        & + \big(\mathcal{L}_{f_i} \mathcal{L}_{g_i}h_i(\mathbf{x}_i)^\top +  \mathcal{L}_{g_i} \mathcal{L}_{f_i}h_i(\mathbf{x}_i) + \nu_i \mathcal{L}_{g_i}h_i(\mathbf{x}_i) \big)u_i,
    \end{aligned}
\end{equation}
\normalsize
where $\nu_i = \eta_i + \kappa_i$. Thus, solving \eqref{eq:max_cap} is equivalent to solving the constrained quadratic program
\small
\begin{equation} \label{eq:max_prob_quad}
    \begin{aligned}
        \max_{u_i \in \overline{\mathcal{U}}_i} \quad & u_i^\top Q_i(x_i) u_i + [q_i(x) + \nu_i \mathcal{L}_{g_i} h_i(x_i)] u_i + \mathcal{L}_{g_i}h_i(x_i)d(u_i),
    \end{aligned}
\end{equation}
\normalsize
with
\begin{equation*}
    Q_i(x_i) = \mathcal{L}^2_{g_i} h_i(x_i)
\end{equation*}

\vspace{-2ex}

\noindent and
\begin{equation*}
    q_i(\mathbf{x}_i) = \mathcal{L}_{f_i} \mathcal{L}_{g_i}h_i(\mathbf{x}_i)^\top +  \mathcal{L}_{g_i} \mathcal{L}_{f_i}h_i(\mathbf{x}_i).
\end{equation*}

Notice, however, that this maximization problem may not necessarily represent the true maximum capability of node~$i$ in satisfying its own safety conditions depending on the properties of $Q_i(x_i)\in \mathbb{R}^{M_i \times M_i}$, i.e., when $Q_i(x_i)$ is negative definite, positive definite, or indefinite. In other words, we must also consider the maximization of $\psi_i^1(\mathbf{x}_i, u_i)$ with respect to its local control
\begin{equation} \label{eq:max_prob_true}
    \begin{aligned}
        \max_{u_i \in \overline{\mathcal{U}}_i} \quad &  \mathcal{L}_{g_i} h_i(x_i) u_i.
    \end{aligned}
\end{equation}
We also consider the following assumption on $\mathcal{L}_{g_i} h_i(x_i)$.
\begin{assumption}\label{assume:non-zero_L_gihi}
    For a given $x_i \in \mathbb{R}^{N_i}$, let all entries of $\mathcal{L}_{g_i} h_i(x_i) \in \mathbb{R}^{M_i}$ be non-zero.
\end{assumption}
In other words, assume that each control input for node~$i$ has some effect on the safety dynamics of $h_i$. Using the above assumptions, we derive the following results.
\begin{lemma} \label{lem:unique_max}
    Given Assumption~\ref{assume:U_i_closed}, $\overline{\mathcal{U}}_i \neq \emptyset$, and $|\mathcal{L}_{g_i} h_i(x_i)| > 0$ for some $x_i \in \mathbb{R}^{N_i}$, \eqref{eq:max_prob_true} must have a maximal solution $u_i^* \in \partial \overline{\mathcal{U}}_i$, with $u_i^* = \argmax_{u_i \in \overline{\mathcal{U}}_i}  \mathcal{L}_{g_i} h_i(x_i) u_i$. Further, if Assumption~\ref{assume:non-zero_L_gihi} holds, then $u_i^*$ is unique.   
\end{lemma}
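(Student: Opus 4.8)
The plan is to treat \eqref{eq:max_prob_true} as the maximization of the linear functional $\phi(u_i) := \mathcal{L}_{g_i} h_i(x_i)\, u_i$ over the admissible set $\overline{\mathcal{U}}_i$, and to handle existence, boundary location, and uniqueness as three separate steps. Writing $c := \mathcal{L}_{g_i} h_i(x_i)^\top \in \mathbb{R}^{M_i}$, the objective is $\phi(u_i) = c^\top u_i$, which is continuous (indeed affine) in $u_i$. For existence I would invoke the Weierstrass extreme value theorem: $\overline{\mathcal{U}}_i$ is nonempty and closed by hypothesis, and---appealing to the boundedness of physical actuation limits, so that $\overline{\mathcal{U}}_i$ is in fact compact---a continuous function attains its maximum on it. This yields at least one maximizer $u_i^*$ and makes the $\argmax$ in the statement well defined. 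I would note here that closedness alone is not enough; compactness (hence boundedness of $\mathcal{U}_i$) is what the existence claim silently needs.

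For the claim $u_i^* \in \partial \overline{\mathcal{U}}_i$, I would argue by contradiction. Suppose $u_i^* \in \mathrm{Int}\,\overline{\mathcal{U}}_i$, so that some ball $B(u_i^*, \varepsilon) \subset \overline{\mathcal{U}}_i$. Since $|\mathcal{L}_{g_i} h_i(x_i)| > 0$ guarantees $c \neq 0$, the perturbed point $u_i^* + \tfrac{\varepsilon}{2}\, c/\|c\|$ is feasible and satisfies $\phi\big(u_i^* + \tfrac{\varepsilon}{2}\, c/\|c\|\big) = \phi(u_i^*) + \tfrac{\varepsilon}{2}\|c\| > \phi(u_i^*)$, contradicting optimality. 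Hence every maximizer lies on $\partial \overline{\mathcal{U}}_i$. This step is routine and only uses that the gradient $c$ of a nonconstant linear functional is a strict ascent direction.

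The uniqueness claim is where the real work lies, and I expect it to be the main obstacle, because for a \emph{general} compact set a linear functional need not have a unique maximizer even when $c$ has no zero entries---for instance when $\partial \overline{\mathcal{U}}_i$ contains a flat face orthogonal to $c$, the entire face is optimal. The decisive structural fact I would therefore exploit is that the admissible set is a box (hyperrectangle) of per-channel actuator limits, $\overline{\mathcal{U}}_i = \prod_{k=1}^{M_i} [\underline u_{i,k}, \overline u_{i,k}]$. Under this structure the objective separates as $\phi(u_i) = \sum_{k=1}^{M_i} c_k\, u_{i,k}$, so each coordinate may be maximized independently: the $k$-th optimal coordinate is $\overline u_{i,k}$ if $c_k > 0$ and $\underline u_{i,k}$ if $c_k < 0$. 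Assumption~\ref{assume:non-zero_L_gihi} guarantees $c_k \neq 0$ for every $k$, which excludes the degenerate case $c_k = 0$ (where any value in $[\underline u_{i,k}, \overline u_{i,k}]$ would be optimal) and pins each coordinate to a single endpoint. Collecting the coordinates gives a single maximizer $u_i^*$, establishing uniqueness. I would flag explicitly that the precise role of Assumption~\ref{assume:non-zero_L_gihi} is exactly to rule out these degenerate coordinates, and that it is the box geometry of $\overline{\mathcal{U}}_i$---rather than mere convexity---that prevents a continuum of optimizers.
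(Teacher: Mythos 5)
Your proof is correct and follows essentially the same route as the paper's: both arguments reduce \eqref{eq:max_prob_true} to coordinate-wise maximization over the product decomposition $\overline{\mathcal{U}}_i = \overline{\mathcal{U}}_i^1 \times \dots \times \overline{\mathcal{U}}_i^{M_i}$ and use Assumption~\ref{assume:non-zero_L_gihi} to pin each coordinate of $u_i^*$ to a single endpoint. Your write-up is in fact more careful on two points the paper glosses over --- you make explicit that existence needs compactness of $\overline{\mathcal{U}}_i$ (the paper only establishes closedness, which does not by itself bound a linear functional) and you justify $u_i^* \in \partial\overline{\mathcal{U}}_i$ by a strict-ascent-direction argument --- while sharing the paper's implicit reliance on the box geometry of $\overline{\mathcal{U}}_i$, which is not automatic from its construction as an intersection of $\mathcal{U}_i$ with half-spaces.
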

\begin{proof}
    By \cite[Algorithm 3]{butler2024collaborativesafety}, we have that $\overline{\mathcal{U}}_i$ is computed by taking the intersection of $\mathcal{U}_i$ with  $\overline{\mathcal{U}}_{\mathcal{N}_i^-} = \bigcap_{k \in \mathcal{N}_i^-} \overline{\mathcal{U}}_{ki}$, where
    \begin{equation*}
        \overline{\mathcal{U}}_{ki} = \{ u_i \in \mathbb{R}^{M_i}: a_{ki}(\mathbf{x}_k) u_i + \bar{c}_{ki} + \delta_{ki} \geq 0 \}.
    \end{equation*}
   Thus, since $\overline{\mathcal{U}}_{\mathcal{N}_i^-}$ is the intersection of closed half-spaces, and $\mathcal{U}_i$ is closed by Assumption~\ref{assume:U_i_closed}, we have that $\overline{\mathcal{U}}_i = \mathcal{U}_i \cap \overline{\mathcal{U}}_{\mathcal{N}_i^-}$ must also be closed. 
    Thus, the expression $\mathcal{L}_{g_i} h_i(x_i) u_i$ evaluated at some $x_i \in \mathbb{R}^{N_i}$ describes a closed hyperplane in $\mathbb{R}^{M_i}$ with at least one non-zero gradient. Therefore, if we define $u_i^* = [u_i^{1*}, \dots u_i^{M_i*}]^\top$, $\mathcal{L}_{g_i} h_i(x_i) = [\mathcal{L}_{g_i} h_i(x_i)^1, \dots, \mathcal{L}_{g_i} h_i(x_i)^{M_i}]$, and $\overline{\mathcal{U}}_i = \overline{\mathcal{U}}_i^1 \times \dots \times \overline{\mathcal{U}}_i^{M_i} \neq \emptyset$, then, for each $m \in [M_i]$,
    \begin{equation} \label{eq:u_i_m-star}
        u_i^{m*} = \argmax_{u_i^{m} \in \partial \overline{\mathcal{U}}_i^{m}} \mathcal{L}_{g_i} h_i(x_i)^m u_i^m.
    \end{equation}
    Further, if $|\mathcal{L}_{g_i} h_i(x_i)^m| > 0$ for all $m \in [M_i]$, i.e., if Assumption~\ref{assume:non-zero_L_gihi} holds, then $u_i^{m*}$ is unique for $m \in [M_i]$ since \eqref{eq:u_i_m-star} is maximizing linear variables with non-zero gradients constrained by the closed set $\overline{\mathcal{U}}_i \subset \mathbb{R}^{M_i}$.
\end{proof}
Since $\mathcal{L}_{g_i} h_i(x_i) u_i$ is a term in \eqref{eq:max_prob_quad}, we gain the following insight on the relationship between \eqref{eq:max_prob_quad} and \eqref{eq:max_prob_true}.

\begin{lemma} \label{lem:nu_star}
    Let Assumptions~\ref{assume:u_dot_func_u}-\ref{assume:non-zero_L_gihi} hold for some $x_i \in \mathbb{R}^{N_i}$ and $x_{\mathcal{N}_i^+} \in \mathbb{R}^{N_i^+}$. Further, let
    \begin{equation} \label{eq:argmax_true}
        u_i^{*} := \argmax_{u_i \in \overline{\mathcal{U}}_i} \mathcal{L}_{g_i} h_i(x_i) u_i
    \end{equation}
    and
    \begin{equation} \label{eq:argmax_quad}
        \begin{aligned}
            u_i^{c} := & \argmax_{u_i \in \overline{\mathcal{U}}_i} & u_i^\top Q_i(x_i) u_i + \mathcal{L}_{g_i}h_i(x_i)d(u_i) 
            \\ & & + \left[q_i(\mathbf{x}_i) + \nu_i \mathcal{L}_{g_i} h_i(x_i) \right] u_i.
        \end{aligned}
    \end{equation}
    Then, as $\nu_i \rightarrow \infty$, we have that $u_i^* = u_i^{c}$. 
\end{lemma}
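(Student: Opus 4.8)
The plan is to read \eqref{eq:argmax_quad} as a penalized form of \eqref{eq:argmax_true}, with $\nu_i$ acting as a penalty weight that forces the maximizer of the quadratic objective toward the maximizer of the linear term. First I would split the objective in \eqref{eq:argmax_quad} into a $\nu_i$-independent remainder
\[
    R_i(u_i) := u_i^\top Q_i(x_i) u_i + \mathcal{L}_{g_i}h_i(x_i)d(u_i) + q_i(\mathbf{x}_i) u_i
\]
and the linear term $L_i(u_i) := \mathcal{L}_{g_i}h_i(x_i)\, u_i$ appearing in \eqref{eq:argmax_true}, so that $u_i^{c}$ maximizes $R_i(u_i) + \nu_i L_i(u_i)$ over $\overline{\mathcal{U}}_i$ while $u_i^{*}$ maximizes $L_i(u_i)$. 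Writing $L_i^{*} := L_i(u_i^{*}) = \max_{u_i \in \overline{\mathcal{U}}_i} L_i(u_i)$, comparing the objective values at $u_i^{c}$ and $u_i^{*}$ and rearranging yields the key inequality
\[
    0 \le L_i^{*} - L_i(u_i^{c}) \le \frac{R_i(u_i^{c}) - R_i(u_i^{*})}{\nu_i},
\]
where the left inequality holds because $u_i^{*}$ maximizes $L_i$ over $\overline{\mathcal{U}}_i$.

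Next I would show the right-hand side vanishes as $\nu_i \to \infty$. Since $Q_i(x_i)$ and $q_i(\mathbf{x}_i)$ are fixed at the evaluated state and $d$ is locally Lipschitz by Assumption~\ref{assume:u_dot_func_u}, $R_i$ is continuous, so it suffices to confine $u_i^{c}$ to a bounded region independent of $\nu_i$. Using the product structure $\overline{\mathcal{U}}_i = \overline{\mathcal{U}}_i^{1} \times \dots \times \overline{\mathcal{U}}_i^{M_i}$ established in the proof of Lemma~\ref{lem:unique_max}, together with Assumption~\ref{assume:non-zero_L_gihi} (each component $\mathcal{L}_{g_i}h_i(x_i)^{m} \neq 0$), the finite existence of $u_i^{*}$ forces every interval $\overline{\mathcal{U}}_i^{m}$ to be bounded in the direction that increases $\mathcal{L}_{g_i}h_i(x_i)^{m} u_i^{m}$. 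The main obstacle is controlling $u_i^{c}$ along the remaining, possibly unbounded, coordinate directions, and this is exactly where the definiteness of $Q_i(x_i)$ enters: when the quadratic is bounded above, any escape of $\|u_i^{c}\|$ to infinity drives $R_i(u_i^{c}) \to -\infty$ and, through the inequality above, would force $L_i^{*} - L_i(u_i^{c}) \to -\infty$, contradicting its nonnegativity; hence $u_i^{c}$ remains bounded and $R_i(u_i^{c}) - R_i(u_i^{*})$ is uniformly bounded, giving $L_i(u_i^{c}) \to L_i^{*}$. I expect this boundedness step to be the delicate part of the argument, since the behavior genuinely depends on whether $Q_i(x_i)$ is negative definite, positive definite, or indefinite, precisely the case distinction flagged before \eqref{eq:max_prob_true}.

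Finally I would upgrade value-convergence to argument-convergence using the same product structure. Because $u_i^{mc} \in \overline{\mathcal{U}}_i^{m}$, we have $\mathcal{L}_{g_i}h_i(x_i)^{m} u_i^{mc} \le \mathcal{L}_{g_i}h_i(x_i)^{m} u_i^{m*}$ for each $m$, so every term of the nonnegative sum $L_i^{*} - L_i(u_i^{c}) = \sum_{m} \mathcal{L}_{g_i}h_i(x_i)^{m} \left(u_i^{m*} - u_i^{mc}\right)$ is nonnegative; since the sum tends to $0$, each term does, and since $\mathcal{L}_{g_i}h_i(x_i)^{m} \neq 0$ by Assumption~\ref{assume:non-zero_L_gihi}, this yields $u_i^{mc} \to u_i^{m*}$ for every $m \in [M_i]$. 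Invoking the uniqueness of $u_i^{*}$ from Lemma~\ref{lem:unique_max} then gives $u_i^{c} \to u_i^{*}$, i.e.\ $u_i^{*} = u_i^{c}$ in the limit $\nu_i \to \infty$, as claimed.
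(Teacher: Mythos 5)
Your proposal is correct and rests on the same underlying idea as the paper's proof---as $\nu_i$ grows the linear term $\mathcal{L}_{g_i}h_i(x_i)u_i$ dominates the objective in \eqref{eq:argmax_quad}---but your execution is genuinely different and noticeably stronger. The paper simply divides the objective by $\nu_i$, observes that the normalized remainder vanishes pointwise, and concludes that the maximizers coincide in the limit; strictly speaking, pointwise convergence of objectives does not by itself imply convergence of their argmaxes, so the paper's proof leaves a gap. Your penalization inequality $0 \le L_i^{*} - L_i(u_i^{c}) \le \bigl(R_i(u_i^{c}) - R_i(u_i^{*})\bigr)/\nu_i$ closes exactly that gap, and your coordinatewise step (each nonnegative term $\mathcal{L}_{g_i}h_i(x_i)^{m}(u_i^{m*}-u_i^{mc})$ must vanish individually, so Assumption~\ref{assume:non-zero_L_gihi} forces $u_i^{mc}\to u_i^{m*}$) correctly upgrades value convergence to argument convergence using the uniqueness from Lemma~\ref{lem:unique_max}. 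The one point you rightly flag but do not fully resolve---boundedness of $u_i^{c}$ uniformly in $\nu_i$ when $\overline{\mathcal{U}}_i$ is closed but unbounded and $Q_i(x_i)$ is not bounded above on it---is also implicitly assumed by the paper (the argmax in \eqref{eq:argmax_quad} must exist for the statement to parse), so this is not a defect of your argument relative to the original; if $\overline{\mathcal{U}}_i$ is compact the issue disappears entirely and your bound $L_i^{*}-L_i(u_i^{c}) \le C/\nu_i$ is immediate. In short, what your route buys is a rigorous quantitative rate in place of the paper's informal limit; what the paper's route buys is brevity.
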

\begin{proof}
     Note that in \eqref{eq:argmax_quad} as we increase $\nu_i$ the linear term $\mathcal{L}_{g_i} h_i(x_i)$ will begin to dominate the expression for the capability of node $i \in [n]$. Therefore, by Assumptions~\ref{assumne:linear_class_K} and \ref{assume:U_i_closed} we have
    \footnotesize
    \begin{equation*}
        \begin{aligned}
            \lim_{\nu_i \rightarrow \infty} & \frac{1}{\nu_i}\left(u_i^\top Q_i(x_i) u_i + q_i(\mathbf{x}_i)u_i + \mathcal{L}_{g_i}h_i(x_i)d(u_i) \right) + \mathcal{L}_{g_i} h_i(x_i) u_i \\ 
            &=  \mathcal{L}_{g_i} h_i(x_i) u_i.
        \end{aligned}
    \end{equation*}
    \normalsize
    Thus, as $\nu_i \rightarrow \infty$ we have $u_i^* = u_i^{c}$, where $u_i^*$ is unique by Lemma~\ref{lem:unique_max}.   
\end{proof}

Given these results, we may quantify the discrepancy between the communicated capability and the actual capability of node~$i$ with respect to a chosen $\nu_i$. 
We define the quantity of neighbor compliance as follows. First, define the minimum assistance requested from neighbor $j \in \mathcal{N}_i^+$ by node~$i$ as the set of control actions
\begin{equation}
    U_j^m(x) = \{u_j \in \overline{\mathcal{U}}_j: a_{ij}(\mathbf{x}_i) u_j + \bar{c}_{ij} = 0 \}, 
\end{equation}
where $\bar{c}_{ij}$ is the safety commitment allocated to node $j$ by node~$i$ according to Algorithm~\ref{alg:colab_safety}. We compute the amount of \textit{compliance} by neighbor $j$ to requests by node~$i$ given any action $u_j \in \mathcal{U}_j$ as
\begin{equation} \label{eq:compliance}
    e_{ij}(\mathbf{x}_i) := a_{ij}(\mathbf{x}_i)(u_j^m - u_j),
\end{equation}
where we may choose any $u_j^m \in U_j^m(\mathbf{x}_i)$ without loss of generality. 
\begin{definition}
    If $e_{ij}(\mathbf{x}_i) < 0$, then neighbor $j$ is \textbf{non-compliant}. The \textbf{neighborhood compliance} of incoming neighbors to node~$i$ is given by $e_i(\mathbf{x}_i) = \sum_{j \in \mathcal{N}_i^+} e_{ij}(x)$, where if $e_i(\mathbf{x}_i) < 0$, then the incoming neighborhood of node~$i$ is \textbf{non-compliant}. 
\end{definition}
If we define $\nu_i^*$ as the minimum $\nu_i$ such that $u_i^c = u_i^*$, then we may compute the upper bound on the total non-compliance agent~$i$ is resilient to as follows.
\begin{theorem}\label{thm:neighbor_error_tol}
    For a given $x \in \mathbb{R}^{N}$ let $\nu_i \leq \nu_i^*$, $u_i^*$, and $u_i^c$ be computed via \eqref{eq:argmax_true} and \eqref{eq:argmax_quad}, respectively. If Assumptions~\ref{assume:u_dot_func_u}-\ref{assume:non-zero_L_gihi} hold and $e_i(\mathbf{x}_i) < 0$,
    then node~$i$ will be capable of satisfying its safety constraints in the presence of non-compliance from its incoming neighborhood up to
    \begin{equation} \label{eq:error_upper_bound}
        \begin{aligned}
            |e_i(\mathbf{x}_i)| & \leq (u_i^*-u_i^c)^\top Q_i(x_i)(u_i^*-u_i^c) + q_i(\mathbf{x}_i)(u_i^*-u_i^c) \\
            & \quad + \mathcal{L}_{g_i}h_i(x_i)(d(u_i^*) - d(u_i^c)) \\
            & \quad + \nu_i^* \mathcal{L}_{g_i}h_i(x_i)u_{i}^{*} - \nu_i \mathcal{L}_{g_i}h_i(x_i)u_{i}^{c}.
        \end{aligned}
    \end{equation}
\end{theorem}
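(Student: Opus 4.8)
The plan is to start from the collaborative certificate itself. By \eqref{eq:psi2_grouped_terms} the CCBF condition \eqref{eq:CCBF_cond} reads $\psi_i^2(\mathbf{x}_i^+,u_i,u_{\mathcal{N}_i^+}) = \sum_{j \in \mathcal{N}_i^+} a_{ij}(\mathbf{x}_i)u_j + c_i(\mathbf{x}_i^+,u_i) \geq 0$. At convergence of Algorithm~\ref{alg:colab_safety} with fully compliant neighbors, the commitments $\bar c_{ij}$ fix minimum-assistance actions $u_j^m \in U_j^m(\mathbf{x}_i)$ with $a_{ij}(\mathbf{x}_i)u_j^m = -\bar c_{ij}$, and node~$i$ certifies its own share through the communicated capability $\bar c_i = c_i(\mathbf{x}_i^+,u_i^c,\nu_i)$ obtained from \eqref{eq:argmax_quad}, so that the realized $\psi_i^2$ is tight at the committed operating point. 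The first step is to substitute, for arbitrary neighbor actions $u_j$, the identity $a_{ij}(\mathbf{x}_i)u_j = a_{ij}(\mathbf{x}_i)u_j^m - e_{ij}(\mathbf{x}_i)$ from \eqref{eq:compliance}. This shows that deviation from the committed profile perturbs the certificate by exactly $\mp\sum_{j}e_{ij} = \mp\,e_i(\mathbf{x}_i)$, so that a non-compliant neighborhood opens a safety deficit whose magnitude is $|e_i(\mathbf{x}_i)|$ and which node~$i$ must absorb through its own control authority.

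Next I would quantify the \emph{reserve} capability node~$i$ holds in excess of what it committed. The key observation, supplied by Lemma~\ref{lem:nu_star} together with Lemma~\ref{lem:unique_max}, is that the quadratic-optimal control $u_i^c$ reported at the design value $\nu_i$ is in general \emph{not} the control that maximizes node~$i$'s direct safety effect $\mathcal{L}_{g_i}h_i(x_i)u_i$; the latter is the unique boundary maximizer $u_i^*$ of \eqref{eq:argmax_true}, and by Lemma~\ref{lem:nu_star} it is precisely the control returned by the capability program once $\nu_i$ is raised to the threshold $\nu_i^*$. Interpreting $c_i(\mathbf{x}_i^+,u_i^*,\nu_i^*)$ as the maximal capability node~$i$ can actually realize and $\bar c_i = c_i(\mathbf{x}_i^+,u_i^c,\nu_i)$ as the capability it committed while running at $\nu_i \leq \nu_i^*$, the usable reserve is $\Delta c_i := c_i(\mathbf{x}_i^+,u_i^*,\nu_i^*) - c_i(\mathbf{x}_i^+,u_i^c,\nu_i)$. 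Expanding both evaluations with the explicit quadratic form in \eqref{eq:max_prob_quad}, the drift and neighbor-influence terms that are independent of $u_i$ cancel between the two operating points, leaving exactly the $Q_i$, $q_i$, $d(\cdot)$, and $\nu\,\mathcal{L}_{g_i}h_i$ contributions collected on the right-hand side of \eqref{eq:error_upper_bound}.

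I would then close the argument by matching deficit against reserve: node~$i$ can still certify $\psi_i^2 \geq 0$ under non-compliance precisely when the deficit it must absorb does not exceed the reserve it can deploy, i.e. $|e_i(\mathbf{x}_i)| \leq \Delta c_i$, which is the claimed bound. Attainability is guaranteed by Lemma~\ref{lem:unique_max}, since $u_i^* \in \partial\overline{\mathcal{U}}_i \subseteq \overline{\mathcal{U}}_i$ is an admissible action, so the recovered capability is genuine rather than hypothetical.

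I expect the main obstacle to be bookkeeping around the parameter $\nu_i$ rather than any deep inequality. Because $\nu_i$ enters both the capability $c_i$ and, through $\psi_i^2$, the very definition of the certificate, I would need to argue carefully that comparing $(u_i^c,\nu_i)$ against $(u_i^*,\nu_i^*)$ measures a true surplus in a common certificate, that the $u_i$-independent terms (including the lone $\nu\,\mathcal{L}_{f_i}h_i$ drift term) indeed cancel so only the listed terms survive, and that the quadratic piece collapses to the stated $Q_i$ form. Equally delicate is pinning down the sign convention of $e_{ij}$ in \eqref{eq:compliance}, so that the hypothesis $e_i(\mathbf{x}_i)<0$ is correctly identified with a \emph{deficit} to be covered rather than a surplus. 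A secondary subtlety is justifying that $u_i^*$, optimal for the direct safety derivative, is the correct control to deploy for maximizing $c_i$ against the deficit, which is exactly what Lemma~\ref{lem:nu_star} licenses in the $\nu_i \to \nu_i^*$ regime.
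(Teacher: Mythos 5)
Your proposal follows essentially the same route as the paper: the reserve $\Delta c_i = c_i(\mathbf{x}_i^+,u_i^*,\nu_i^*) - c_i(\mathbf{x}_i^+,u_i^c,\nu_i)$ is exactly the paper's $\epsilon_i(\mathbf{x}_i,\nu_i)$, the ordering $c_i(\mathbf{x}_i^+,u_i^*,\nu_i^*) \geq c_i(\mathbf{x}_i^+,u_i^c,\nu_i)$ comes from Lemmas~\ref{lem:unique_max} and \ref{lem:nu_star} just as in the paper, and the deficit-versus-reserve matching via \eqref{eq:psi2_grouped_terms} and \eqref{eq:compliance} is the paper's closing step. The bookkeeping worries you flag (that $u_i^{*\top}Q_iu_i^* - u_i^{c\top}Q_iu_i^c$ does not literally collapse to $(u_i^*-u_i^c)^\top Q_i(u_i^*-u_i^c)$, and that the $\nu_i\mathcal{L}_{f_i}h_i$ term does not cancel when $\nu_i \neq \nu_i^*$) are legitimate, but the paper's own proof simply asserts the form of $\epsilon_i$ without resolving them, so your attempt is no weaker than the original on these points.
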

\begin{proof}
    By Assumption~\ref{assume:non-zero_L_gihi} and Lemma~\ref{lem:unique_max} we that $u_i^* \in \partial \overline{\mathcal{U}}_i$ is unique and therefore $\mathcal{L}_{g_i} h_i(x_i) u_i^* >  \mathcal{L}_{g_i} h_i(x_i) u_i, \forall u_i \in \overline{\mathcal{U}}_i \setminus \{u_i^*\}$. Thus, for all $\nu_i \leq \nu_i^*$, we have
    \begin{equation}\label{eq:greater_capability}
        c_i(\mathbf{x}_i^+,u_i^*,\nu_i^*) \geq c_i(\mathbf{x}_i^+,u_i^c,\nu_i) 
    \end{equation}
    for a given $x \in \mathbb{R}^N$. Consider when
    \begin{equation*}
        c_i(\mathbf{x}_i^+,u_i^*,\nu_i^*) \leq 0
    \end{equation*}
    and there exists $\nu_i > 0$ such that 
    \begin{equation} \label{eq:equally_satisfied}
         \psi_i^2(\mathbf{x}_i^+,u_i^c,u^c_{\mathcal{N}_i^+}, \nu_i) = \psi_i^2(\mathbf{x}_i^+,u_i^*,u^*_{\mathcal{N}_i^+}, \nu_i^*).
    \end{equation}
    Thus, by \eqref{eq:equally_satisfied}, we have
    \begin{equation}
        \begin{aligned}
            \sum_{j \in \mathcal{N}_i^+} a_{ij}(u_j^* - u_j^c) + c_i(\mathbf{x}_i^+,u_i^*,\nu_i^*) - c_i(\mathbf{x}_i^+,u_i^c,\nu_i) = 0
        \end{aligned}
    \end{equation}
    and by \eqref{eq:greater_capability} we have
    \begin{equation*}
        \sum_{j \in \mathcal{N}_i^+} a_{ij}(u_j^* - u_j^c) \leq 0.
    \end{equation*}
    In this way, we characterize the difference between the true maximum capability of node~$i$ and its communicated maximum capability as
    \begin{equation} \label{eq:neighbor_error_tol}
        \begin{aligned}
            \epsilon_i(\mathbf{x}_i, \nu_i) &:= (u_i^* - u_i^c)^{\top} Q_i(x_i)(u_i^* - u_i^c) + q_i(\mathbf{x}_i)(u_i^* - u_i^c) \\
             & \quad + \mathcal{L}_{g_i}h_i(x_i)(d(u_i^*) - d(u_i^c)) \\
            & \quad + \nu_i^* \mathcal{L}_{g_i}h_i(x_i)u_{i}^{*} - \nu_i \mathcal{L}_{g_i}h_i(x_i)u_{i}^{c} \geq 0,
        \end{aligned} 
    \end{equation}
    where, by Assumptions~\ref{assumne:linear_class_K}-\ref{assume:non-zero_L_gihi} and Lemma~\ref{lem:nu_star}, we have if $\nu_i \geq \nu_i^*$, then $\epsilon_i(\mathbf{x}_i, \nu_i) = 0$. Therefore, if we consider any deficiency between the minimum requested assistance via Algorithm~\ref{alg:colab_safety} and the actual actions of neighbors $e_{ij} = a_{ij}(u_j^m - u_j)$ for $j \in \mathcal{N}_i^+$, if $\left| \sum_{j \in \mathcal{N}_i^+} e_{ij} \right| \leq \epsilon_i(\mathbf{x}_i, \nu_i)$ then there exists a $u_i^* \in \overline{\mathcal{U}}_i$ and $\nu_i^*$ such that $\sum_{j \in \mathcal{N}_i^+} a_{ij}(\mathbf{x}_i) u_j + c_i(\mathbf{x}_i^+, u_i^*,\nu_i^*) \geq 0$.   
\end{proof}

Note that $u_i^*$ is a function of $x_i$ and $u_i^c$ is a function of $\mathbf{x}_i$ and $\nu_i$. Thus, by Theorem~\ref{thm:neighbor_error_tol} we may compute the resilience that node~$i$ has to neighborhood non-compliance with respect to a given neighborhood state and choice of $\nu_i = \eta_i + \kappa_i$. Further, note that if $\nu_i \geq \nu_i^*$, then $u_i^c=u_i^*$ and \eqref{eq:error_upper_bound} becomes $|e_i| = 0$. Thus, we can use $\nu_i^*$ to characterize a kind of \textit{resilience boundary} for a given 1-hop neighborhood state $\mathbf{x}_i$, where any $\nu_i \geq \nu_i^*$ characterizes a system with no resilience.

This measure of resilience may be useful in considering parameter selection for each node in the network depending on the reliability of neighboring nodes and the consequences of node failure. However, it should also be noted that, as \eqref{eq:neighbor_error_tol} increases, node~$i$ will place a greater burden of its safety on its neighbors $j \in \mathcal{N}_i^+$, which could lead to an over-taxing of neighbor control resources and under-utilization of the resources of node~$i$. Therefore, the context and consequences of over-requesting by neighbors in specific model applications should be considered when selecting $\eta_i$ and $\kappa_i$. We illustrate these behaviors on networked epidemic processes in the following section.

\section{Simulations} \label{sec:simulations}

We define an SIS networked epidemic spreading process \cite{pare2020modeling} with $n$ nodes where $x_i$ is the proportion of infected population at node $i\in [n]$, with the infection dynamics defined by
\vspace{-2ex}

\begin{equation} \label{eq:sis_networked}
    \dot{x}_i = -(\gamma_i+u_i) x_i + (1-x_i)\sum_{j \in [n]}\beta_{ij} x_j,
\end{equation}

\noindent where $\gamma_i>0$ is the recovery rate at node~$i$, $u_i \in \mathcal{U}_i \subset \mathbb{R}_{\geq 0}$ is a control input boosting the healing rate at node~$i$, and $\beta_{ij} \geq 0$ is the networked connection going from node $j$ to node~$i$ (note $\beta_{ii}$ is simply the infection rate occurring at node~$i$).

 Let $\mathcal{U}_i = [0,\bar{u}_i]$, where $\bar{u}_i$ is the upper limit on boosting the healing rate at node~$i$. We let each node define its individual safety constraint as
\begin{equation} \label{eq:sis_indv_bar_func}
    h_i(x_i) = \bar{x}_i - x_i,
\end{equation}
where $\bar{x}_i \in (0,1]$ is the defined safety threshold for the acceptable proportion of infected individuals at node~$i$. Thus, the individual safe sets for each node are given by
\vspace{-1.5ex}

\begin{equation}\label{eq:sis_indv_safe_set}
    \mathcal{C}_i = \{ x \in [0,1]^n: h_i(x_i) \geq 0 \}.
\end{equation}

For this example, we construct a simple 3-node system where $\beta_{ij} = 0.25$ and $\beta_{ii} = 0.5$, for all $j \neq i$ and $j = i$, respectively. To induce an endemic state, we set the healing rate $\gamma_i = 0.3$, for all nodes. We set the safety constraints for each node as $\bar{x}_1 = 0.1$, $\bar{x}_2 = 0.12$, and $\bar{x}_3 = 0.18$ and set the initial infection levels at $x = [0.04, 0.01, 0.02]$.

To illustrate how $\nu_i = \eta_i + \kappa_i$ can be used to increase resilience to sudden changes or system failures, we simulate the same system for both large $\nu_i$ ($\eta_i = 10$ and $\kappa_i = 1$) and small $\nu_i$ ($\eta_i = 0.3$ and $\kappa_i = 0.3$), for all $i \in [3]$, in Figures~\ref{fig:eta10_kappa1} and \ref{fig:eta_kappa_p3}, respectively. In both simulations, we set the control limits $\mathcal{U}_i(t) = [0,0.75]$ for $t < 10$ and then perturb the system by suddenly reducing the control ability to $\mathcal{U}_i(t) = [0, 0.6]$ for $t \geq 10$, for all $i \in[3]$. Note that the collaborative safety framework maintains safety for both instances up to $t=10$, and when the system experiences the failure at $t=10$ the dynamics react accordingly. For the less resilient system, shown in Figure~\ref{fig:eta10_kappa1}, we see node~$1$ violates its safety requirement after the system failure at $t=10$. By contrast, we see in Figure~\ref{fig:eta_kappa_p3} that when $\nu_i$ is small for all $i \in [3]$, it causes node~$1$ to over-request assistance from its neighbors, leaving increased control ability for node~$1$ that allows it to be resilient to the network failure, consistent with the result in Theorem~\ref{thm:neighbor_error_tol}. Thus we see that the network is able to operate reliably even after experiencing some failures when $\nu_i$ is sufficiently small for all $i$, i.e., when $\nu_i \ll \nu_i^*$. 

Recall from Theorem~\ref{thm:neighbor_error_tol} that $\nu_i^*$ is the minimum $\nu_i$ such that $u_i^c = u_i^*$, where $u_i^*$ and $u_i^c$ are computed via \eqref{eq:argmax_true} and \eqref{eq:argmax_quad}, respectively.
To visualize the resilience boundary, $\nu_i^*$, of node 1 for a given network state, we fix $x_2 = x_3 = 0.1$ and approximate $\nu_1^*$ in Figure~\ref{fig:nu_star} by incrementing $\nu_1$ by $\delta_{\nu}=0.01$ until $u_1^* = u_1^c$ for $x_1 = [0, \bar{x}_1]$. In Figure~\ref{fig:nu_star}, we see that, as $x_1$ approaches $\bar{x}_1$, the resilience boundary increases to include larger values of $\nu_1$ that would enable resilience for node~$1$ to non-compliant neighbors. We then use this approximation of $\nu_i^*$
to generate the surface in Figure~\ref{fig:neighbor_error_tol_surf} by computing $\epsilon_i(x, \nu_i)$ using \eqref{eq:neighbor_error_tol} for $x_i \in [0, \bar{x}_1]$ and $\nu_1 \in [0, 0.8]$. Note that for small $x_1$ we have that the linear term $q_1(x) =  \mathcal{L}_{g_i}\mathcal{L}_{f_i}h_i(\mathbf{x}_i)^\top + \mathcal{L}_{g_i}\mathcal{L}_{f_i}h_i(\mathbf{x}_i)$ overpowers the negative definite quadratic term $Q_1(x_1)$, thereby setting $\nu_1^* = 0$. 
Therefore, we see that the tolerance for neighborhood non-compliance grows as $\nu_1$ decreases, and as $x_1$ approaches the barrier of its safety constraints it will require a larger amount of error tolerance from its neighbors.

\begin{figure}
    \centering
    \captionsetup{belowskip=-12pt}
    \begin{subfigure}[b]{0.36\textwidth}  
        \centering 
        \includegraphics[width=\textwidth]{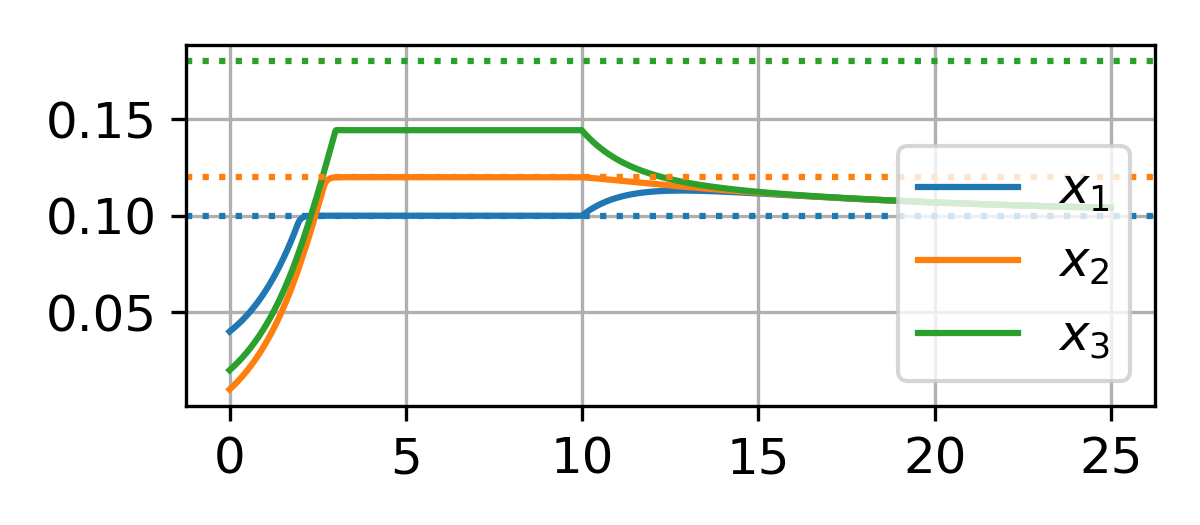}
    \end{subfigure}
    \begin{subfigure}[b]{0.36\textwidth}   
        \centering 
        \includegraphics[width=\textwidth]{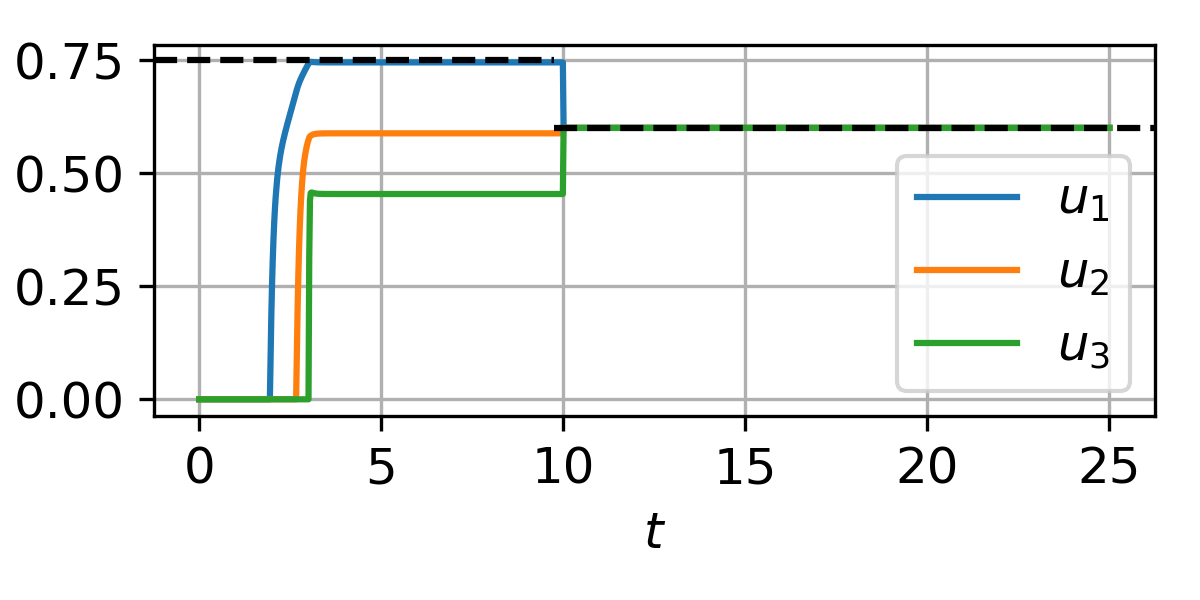}
    \end{subfigure}
    \caption{
    Simulated networked SIS model from \eqref{eq:sis_networked}, with each node implementing the collaborative safety framework from \cite{butler2024collaborativesafety}, where $n=3$ and $\eta_i=10, \kappa_i=1$ for all $i \in [3]$. Each node's safety constraint $\bar{x}_i$ is represented by the dotted line of corresponding color and the input constraint $\mathcal{U}_i(t)$ for all nodes $i \in [3]$ is shown by the black dotted line, where $\mathcal{U}_i(t) = [0,0.75]$ if $t < 10$ and $\mathcal{U}_i(t) = [0,0.6]$ if $t \geq 10$. Note that the system failure at $t=10$ causes node~$1$ to violate its safety constraint.
    } 
    \label{fig:eta10_kappa1}
\end{figure}

\begin{figure}
    \centering
    \captionsetup{belowskip=-10pt, aboveskip=-5pt}
    \begin{subfigure}[b]{0.36\textwidth}  
        \centering 
        \includegraphics[width=\textwidth]{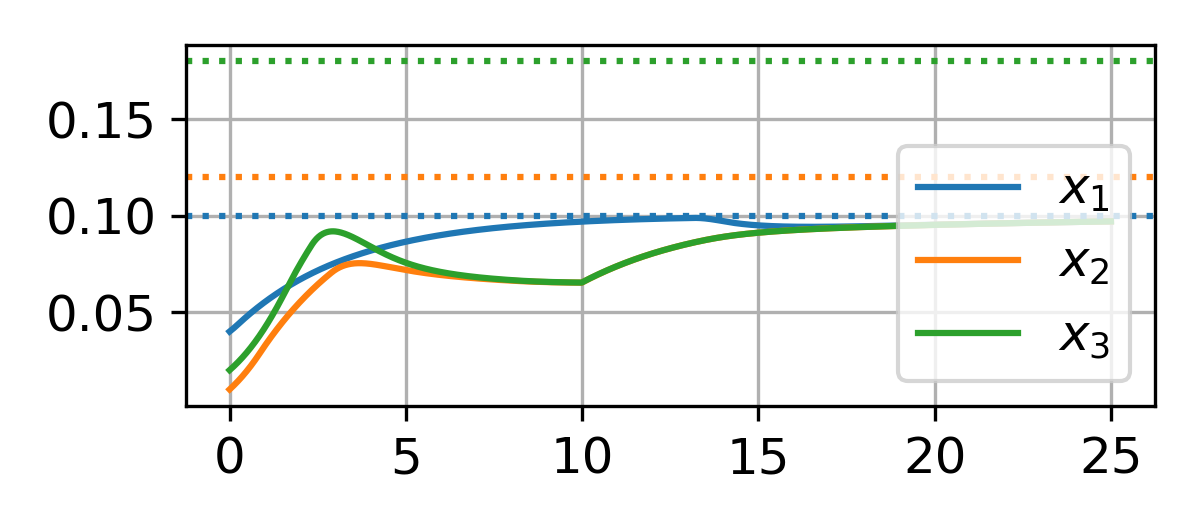}
    \end{subfigure}
    \begin{subfigure}[b]{0.36\textwidth}  
        \centering 
        \includegraphics[width=\textwidth]{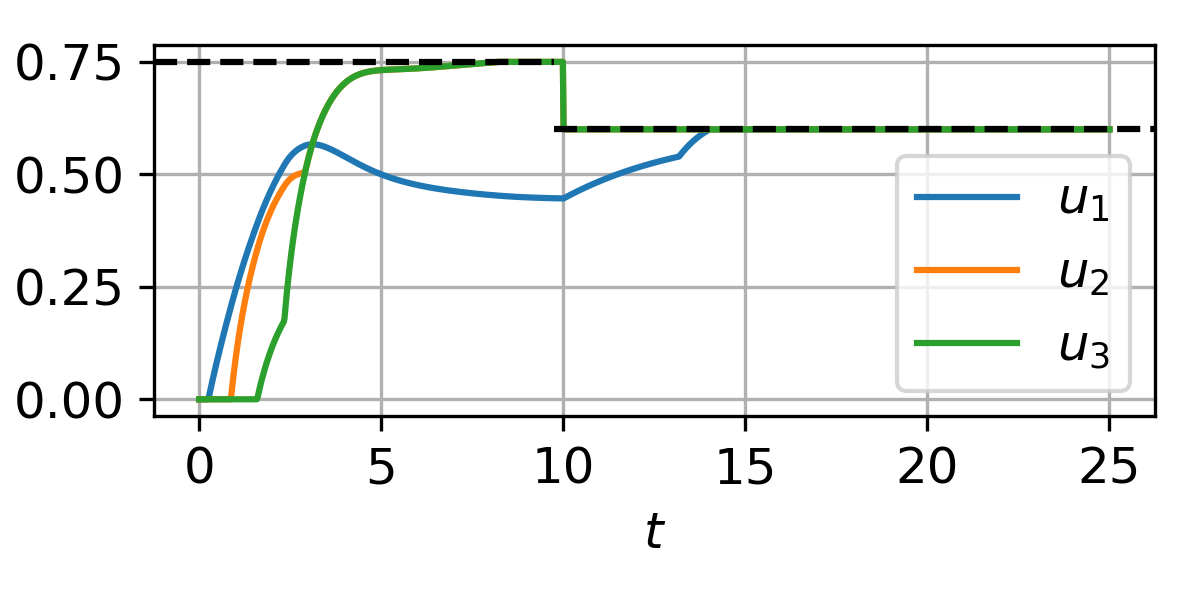}
    \end{subfigure}
    \caption{
    The same simulation from Figure~\ref{fig:eta10_kappa1} except $\eta_i=\kappa_i=0.3$ for all $i \in [3]$. Note that, unlike in Figure~\ref{fig:eta10_kappa1}, node~$1$ is able to satisfy its safety constraint for all $t$. \vspace{-1ex}
    } 
    \label{fig:eta_kappa_p3}
\end{figure}

\begin{figure}
    \centering
    \captionsetup{belowskip=-16pt, aboveskip=-5pt}
    \includegraphics[width=.6\columnwidth,trim={.5cm 0 0 0},clip]{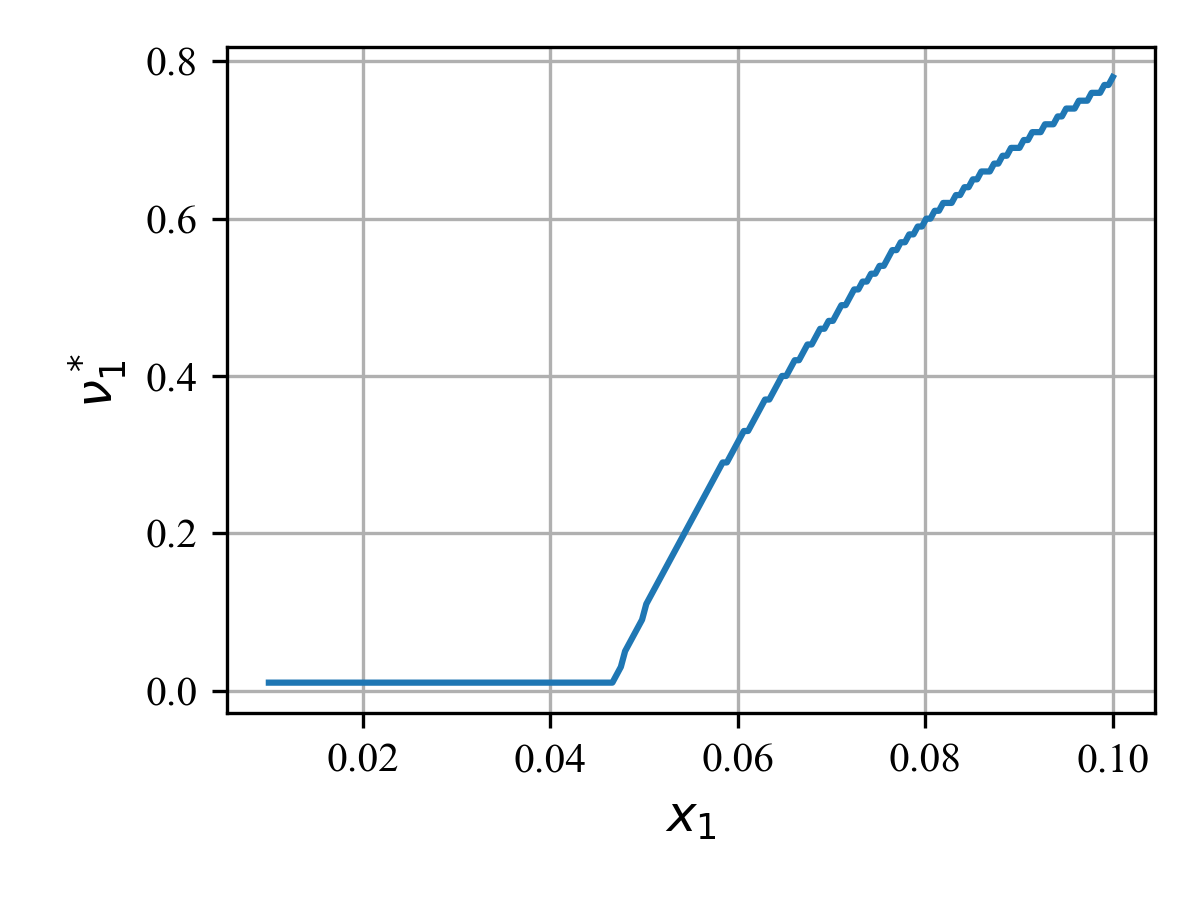}
    \caption{A numerical approximation of $\nu_1^*$ for $x_2 = x_3 = 0.1$ with $x_1 \in [0, \bar{x}_1]$, where $\nu_1$ is incremented by $\delta_{\nu} = 0.01$ until $u_1^* = u_1^c$. Notice as $x_1$ approaches $\bar{x}_1$, the resilience boundary increases to include larger values of $\nu_1$ that would enable resilience for node~$1$ to non-compliant neighbors. \vspace{-1ex}}
    \label{fig:nu_star}
\end{figure}

\begin{figure}
    \centering
    \captionsetup{belowskip=-15pt}
    \begin{subfigure}[b]{0.7\columnwidth}
        \centering 
        \includegraphics[width=\textwidth]{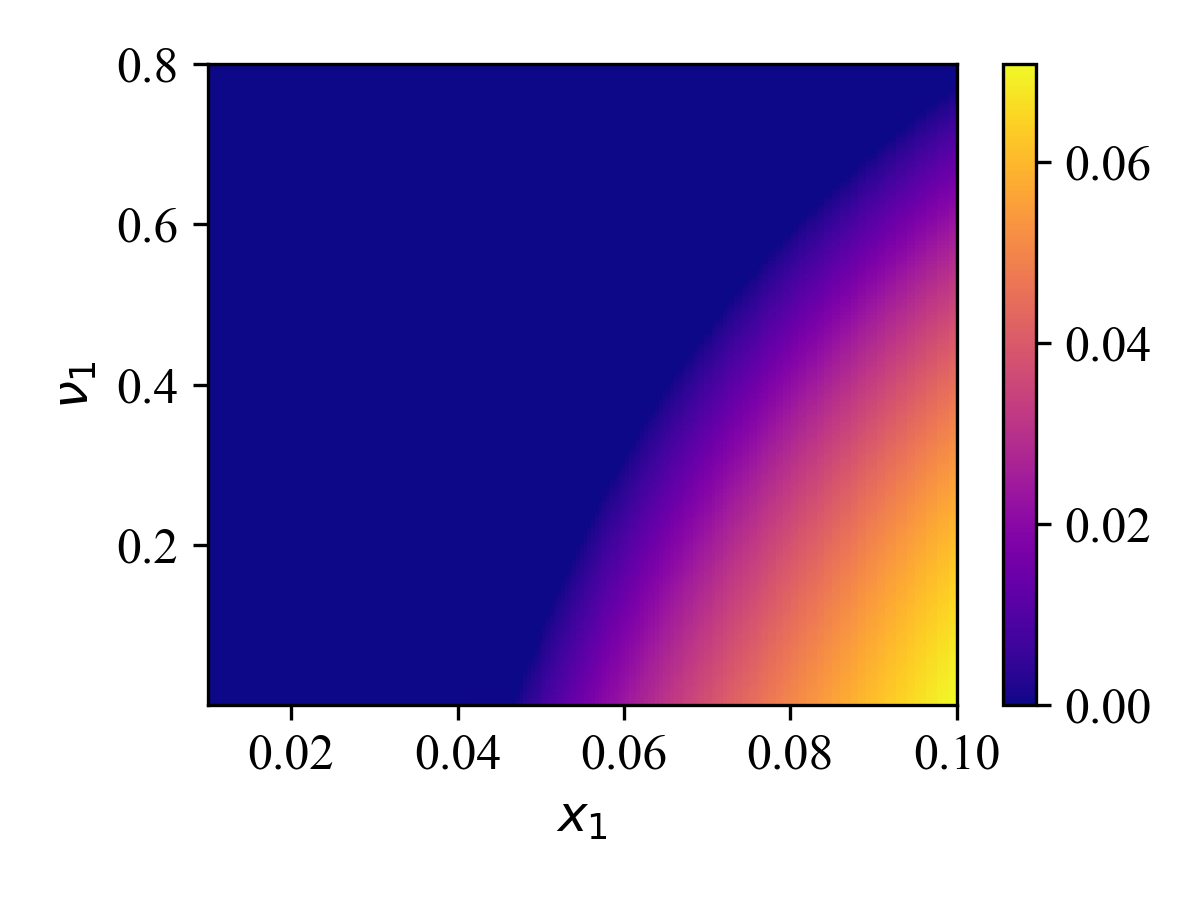}
    \end{subfigure}
    \caption{
    The error tolerance in neighbor request fulfillment $\epsilon_i(x, \nu_i)$, defined by \eqref{eq:neighbor_error_tol}, computed for $x_2 = x_3 = 0.1$ with $x_1 \in [0, \bar{x}_1]$ and $\nu_1 \in [0, 0.8]$. Notice as $x_1$ approaches the barrier of its safety constraints it requires a larger amount of error tolerance from its neighbors.
    }
    \label{fig:neighbor_error_tol_surf}
\end{figure}

\section{Conclusion} \label{sec:conclusion}
In this letter, we have explored properties of resilience for coupled agents with respect to non-compliance to safety requests made to neighbors under a collaborative safety framework. We have computed an upper bound on the total non-compliance each agent may tolerate while still being able to satisfy their individual safety needs given its 1-hop neighborhood state and knowledge of networked dynamics. Future work includes extending this property of non-compliance to include general uncertainty and noise in the coupled dynamic system. Additionally, thus far our collaborative framework assumes all agents are cooperative; however, since the notion of non-compliance naturally implies scenarios where some agents may act maliciously, further work will account for such non-cooperative agents. 

\normalem
\bibliographystyle{IEEEtran}
\bibliography{references}

\end{document}